\documentclass[12pt,oneside,a4paper,leqno]{article}
\usepackage[utf8]{inputenc}
\usepackage[english]{babel}
\usepackage[all]{xy}
\CompileMatrices

\newdir{ >}{!/10pt/@{ }*@{>}}
\newdir{< }{!/10pt/@{ }*@{<}}

\usepackage{graphicx}
\usepackage{amsfonts,amssymb}
\usepackage[centertags]{amsmath}

\usepackage{amsthm} 
\newcounter{theorems}

\theoremstyle{plain}

\swapnumbers
\newcounter{lemma}
\numberwithin{equation}{section}

\newtheoremstyle{par}
     {\topsep}
     {\topsep}
     {\itshape}
     {}
     {\bfseries}
     {}
     {.5em}
     {}

\newtheoremstyle{parrm}
     {\topsep}
     {\topsep}
     {\normalfont}
     {}
     {\itshape}
     {}
     {.5em}
     {}

\theoremstyle{plain}
\numberwithin{equation}{section}

\newtheorem{theo}[equation]{Theorem}
\newtheorem{coro}[equation]{Corollary}

\theoremstyle{definition}

\newtheorem{example}[equation]{Example}

\theoremstyle{remark}
\newtheorem{remark}[equation]{Remark}

\theoremstyle{par}

\newtheorem{propo}[equation]{}

\theoremstyle{parrm}


\newcommand{\ind}{\operatorname{ind}}

\makeatletter
\def\tagform@#1{\maketag@@@{\ignorespaces#1\unskip\@@italiccorr}}

\makeatother


\usepackage{paralist}
\usepackage{comment}
\usepackage{subfigure}
\newcommand{\RR}{\mathbb{R}}
\newcommand{\CC}{\mathbb{C}}
\newcommand{\ZZ}{\mathbb{Z}}
\newcommand{\PP}{\mathbb{P}}
\newcommand{\FF}{\mathbb{F}}
\renewcommand{\SS}{\mathbb{S}}
\newcommand{\MM}{\mathbb{M}}

\newcommand{\from}{\colon}

\usepackage{bm}
\newcommand{\simbolovettore}[1]{{\boldsymbol{#1}}}

\newcommand{\vq}{\simbolovettore{q}}

\newcommand{\vu}{\simbolovettore{u}}
\newcommand{\vv}{\simbolovettore{v}}
\newcommand{\vw}{\simbolovettore{w}}

\newcommand{\vN}{\simbolovettore{N}}

\newcommand{\zero}{\boldsymbol{0}}

\newcommand{\Fix}{\operatorname{Fix}}

\DeclareMathOperator{\Ima}{Im}

\usepackage[textwidth=28mm]{todonotes}
\presetkeys{todonotes}{size=\scriptsize,backgroundcolor=yellow!30,linecolor=red}{}

\usepackage{multirow,rotating}

\bibliographystyle{abbrv}


\newcommand{\term}[1]{\emph{#1}}

\newcommand{\grad}{\operatorname{grad}}
\newcommand{\hess}{\operatorname{Hess}}


\newcommand{\eucnorm}[1]{\lvert{#1}\rvert}
\newcommand{\mnorm}[1]{{\lVert{#1}\rVert}_M}
\newcommand{\mscalar}[2]{{\langle{#1},{#2}\rangle}_{M}}

\begin{document}
\pagenumbering{arabic}

\title{%
Central configurations, Morse and fixed point indices
}

\author{D.L.~Ferrario}

\date{%
\today}
\maketitle

\begin{abstract}
We compute the fixed point index of 
non-degenerate central configurations for the $n$-body problem 
in the euclidean space of dimension $d$, 
relating it to the Morse index of the gravitational potential 
function $\bar U$
 induced on the manifold of all maximal $O(d)$-orbits.
In order to do so, we analyze the geometry of maximal orbit type
manifolds, and compute Morse indices with respect to the mass-metric
bilinear form  on configuration spaces.  

\noindent {\em MSC Subject Class\/}: 
\vspace{0.5truecm}
70F10, 55M20

\noindent {\em Keywords\/}: Central configurations, relative equilibria,
$n$-body problem, fixed point indices.
\end{abstract}

\section{Introduction: central configurations as critical points}

Let $E=\RR^d$ be the $d$-dimensional euclidean space, for $d\geq 1$. 
Fix an integer $n\geq 2$. 
The \term{configuration space} of $n$ (colored) points in $E$ is the set of all $n$-tuples
of distinct points in $E$, and denoted by $\FF_n(E)$:
\[
\FF_n(E) = \{ \vq \in E^n : i\neq j \implies \vq_i \neq \vq_ j \} = E^n \smallsetminus \Delta, 
\] 
where if $\vq\in E^n$, its $n$ components  are denoted by 
$\vq_j$, $j=1,\ldots, n$; 
points in $\FF_n(E)$ are termed \term{configurations} of $n$ points in $E$;
its complement in $E^n$ is the set of \term{collisions} 
\[
\begin{aligned}
\Delta & = \{ \vq \in E^n : \exists (i,j), i\neq j : \vq_i = \vq_j \} \\
& = \bigcup_{1\leq i < j \leq n} \{ \vq\in E^n : \vq_i = \vq_j \}.
\end{aligned}
\]

For $j=1,\ldots, n$ let $m_j>0$ be  fixed parameters (that can be interpreted as 
the mass of the $j$-th particle in $E$), under the normalization condition
\[
\sum_{j=1}^n m_j = 1~.
\]
If $\vv,\vw$ are  vectors in (the tangent space of) $E^n$,
then let 
\[
\mscalar{\vv}{\vw} = 
\sum_{j=1}^n m_j \vv_j  \cdot \vw_j 
\]
denote the mass scalar product of $\vv$ and $\vw$,
where $\vv_j \cdot \vw_j$ is the standard euclidean 
scalar product (in $E$) of the $j$-th components 
of $\vv$ and $\vw$. 
The unit sphere in $\FF_n(E)$ is termed the \term{inertia ellipsoid} and denoted by 
\[
\SS = \SS_n(E) = \{ \vq \in \FF_n(E) : \mnorm{\vq}^2 = 1 \}~.
\]
It is equal to the unit sphere/ellipsoid in $E^n$, with collisions removed,
$\SS_n(E) = S_n(E) \smallsetminus \Delta$. 
The unit sphere/ellipsoid in $E^n$ is denoted by 
$S_n(E) = \{ \vq \in E^n : \mnorm{\vq}^2 = 1\}$. To simplify notation,
if possible we will use the short forms $\SS$ and $S$ instead of 
$\SS_n(E)$ and $S_n(E)$. 

The \term{potential function} $U\from \FF_n(E) \to \RR$ is simply defined as 
\[
\sum_{1\leq i<j \leq n} \dfrac{m_im_j}{\eucnorm{\vq_i - \vq_j}^\alpha},
\]
given a fixed parameter $\alpha>0$. For $\alpha=1$, $U$ is the 
gravitational potential. It is invariant under the full group of isometries
of $E$, acting diagonally on $\FF_n(E)$.

Let $D=\nabla$ denote the covariant derivative (which is the 
Levi-Civita connection with respect to the mass-metric) in $\FF_n(E)$,
which is again the standard derivative. 
If $F\from \FF_n(E) \to E$ is a smooth function,
then $DF = dF$ is the differential of $F$, which is a section of 
the cotangent bundle
$T^* \FF_n(E)$ defined as $DF[\vv] = D_\vv F$ for each vector field $\vv$ on 
$\FF_n(E)$. 
If $\vv$ and $\vw$ are two vector fields on $\FF_n(E)$, then 
$D_\vv \vw$ is the (Euclidean and covariant) derivative of $\vw$ in the direction of $\vv$.

Let $\nabla^S$ denote the covariant derivative (Levi-Civita connection) on $S$, 
induced by the mass-metric of $\FF_n(E)$ restricted to $S$, i.e. the 
restriction to $S$ of the Riemannian structure of $\FF_n(E)$.  
If $\vv$ and $\vw$ are two vector fields defined in a neighborhood of $S$, 
then the covariant derivative $\nabla^S_\vv \vw$ 
is equal,
at $x\in S$, 
to the orthogonal projection of $D_\vv \vw$, projected orthogonally 
to the tangent space $T_xS$
(cf. proposition 3.1 at page 11 of \cite{kobayashivol2},
or proposition 1.2 at page 371 of \cite{Lang1999}). The same holds
with $\SS\subset S$ instead of $S$. 
If $\Pi$ denote the projection 
$T \FF_n(E) \mapsto 
T \SS$, 
then $\nabla^S _\vv \vw = \Pi D_\vv \vw$. 

If $F\from \FF_n(E) \to \RR$ is a smooth function, and $f = F|_\SS$ is its restriction to $\SS$,
then $\nabla^S f = df$ is the restriction of $dF$ to the tangent bundle $T\SS$. 
Let $\grad(f)=df^\sharp$ and $\grad(F)=dF^\sharp $ 
denote the gradients of $f$ and $F$ respectively (i.e., the images of the differentials
under the musical isomorphisms induced by the mass-metric). For each $x\in \SS$,
$df^\sharp(x) \in  T_x \SS$  and
$dF^\sharp(x) \in T_x \FF_n(E)$ 
satisfy the equations
\[
\mscalar{df^\sharp}{\vv} 
= df[\vv] =
\mscalar{dF^\sharp}{\vv} = 
dF[\vv]  
\]
for any $\vv \in T_x \SS$, and hence 
$\grad(f) = df^\sharp$ is the projection of $\grad(F)=dF^\sharp$ on the tangent space $T_x \SS$.
A \term{critical point} of $f$ is a point $x\in \SS$ such that 
$df=0 \iff \grad(f) = 0$, which is equivalent to say that 
$\grad(F)$ is orthogonal to $T_x \SS$. 

The \term{Hessian} of the function $f$, at a critical point $x$ of $f$ in $\SS$,
 is (cf. page 343 of \cite{Lang1999}) equal to 
the bilinear form $\hess(f)[\vv,\vw]$, defined on the tangent space $T_x \SS$
as 
\[
\hess(f)[\vv,\vw] (x) = 
\nabla^S_\vv \nabla^S_\vw f - \nabla^S_{\nabla^S_\vv \vw} f) (x) =  
(\nabla^S_\vv \nabla^S_\vw f) (x)
\]
where $\vv$ and $\vw$ are two vector fields defined in a neighborhood of $x$. 

The Hessian of $F$ is simply the symmetric matrix of all the second derivatives
$D^2 F$:
\[\begin{aligned}
\hess(F)[\vv,\vw](x) & = (D_\vv D_\vw F) (x) = D^2F(x) [\vv,\vw]  \\
& = 
\sum_{\substack{i=1\ldots n\\ \beta=1,\ldots, d}} \sum_{\substack{j=1,\ldots, n\\\gamma=1,\ldots,d}}
\dfrac{\partial^2 F}{\partial \vq_{i\beta} \partial \vq_{j\gamma}} 
\vv_{i\beta} \vw_{j\gamma}
\end{aligned}
\] 
where $\vq_{i\beta},  \vv_{i\beta}$ and $\vw_{j\gamma}$ 
are the $d$ cartesian components in $E$  ($\RR^d$ as the tangent space of $E$) 
of $\vq_i$, $\vv_i$ and $\vw_j$ respectively. 

Using the mass-metric, 
 if $\vN$ denotes the 
unit vector field normal to $T_x \SS$ in $T_x \FF_n(E)$, 
the projection of $\nabla^S_\vv \vu$ of any vector field $\vu$ on $T_x \SS$ is 
\[
\nabla^S_\vv \vu = D_\vv \vu - \mscalar{ D_\vv \vu}{\vN} \vN ,
\]
and
\[
df^\sharp  = dF^\sharp  - \mscalar{dF^\sharp}{\vN} \vN~.
\]
The Hessian can be written also as
(cf. page 344 of \cite{Lang1999}) 
$ \hess(f)[\vv,\vw] (x)   = 
 \mscalar{ \nabla^S_\vv df^\sharp}{\vw} $
and 
$\hess(F)[\vv,\vw] (x) = \mscalar {D_\vv dF^\sharp}{\vw}$. 
It follows therefore that 
\[\begin{aligned}
\hess(f)[\vv,\vw] (x)  & = 
 \mscalar{ \nabla^S_\vv df^\sharp}{\vw} \\
& = 
\mscalar{ \nabla^S_\vv 
\left( dF^\sharp  - \mscalar{dF^\sharp}{\vN} \vN  \right)
}{\vw} 
\\
& = 
\mscalar{ \nabla^S_\vv \left(
 dF^\sharp  \right)
}{\vw} 
-
\mscalar{ \nabla^S_\vv\left( 
\mscalar{dF^\sharp}{\vN} \vN \right)  }{\vw}.
\\
\end{aligned}
\]
Because of the product rule for each function $\varphi$ and each 
vector field $\vu$
\[
\nabla^S_\vv\left( \varphi \vu \right)  = 
\varphi \nabla^S_\vv \vu + (d\varphi[\vv]) \vu 
\]
\[
\implies 
\nabla^S_\vv\left( 
\mscalar{dF^\sharp}{\vN} \vN
\right)  = 
\mscalar{dF^\sharp}{\vN} \nabla^S_\vv \vN + 
d\left( \mscalar{dF^\sharp}{\vN} \right)[\vv] \vN 
\]
which implies that 
\[
\mscalar{ \nabla^S_\vv\left( 
\mscalar{dF^\sharp}{\vN} \vN \right)  }{\vw} 
= 
\mscalar{dF^\sharp}{\vN} 
\mscalar{  
\nabla^S_\vv \vN}{\vw} 
\]
since $\vN$ is orthogonal to $\vw$. 
The same argument can be applied to show that
for any vector field $\vu$ (not necessarily tangent to $\SS$)
\[
\mscalar{\nabla^S_\vv \vu}{\vw}  = \mscalar{D_\vv \vu}{\vw}, 
\]
and therefore that, evaluated at the critical point $x$, 
\[\begin{aligned}
\hess(f)[\vv,\vw] & =  
\mscalar{ D_\vv \left(
 dF^\sharp  \right)
}{\vw} 
-
\mscalar{dF^\sharp}{\vN} 
\mscalar{  
D_\vv \vN}{\vw} 
\\
& = D^2 F [\vv, \vw] 
-
\mscalar{dF^\sharp}{\vN} 
\mscalar{  
D_\vv \vN}{\vw} 
\end{aligned}
\]

The inertia ellipsoid $S$ is defined by the equation $\mnorm{\vq}^2 = 1$,
or equivalently $h(\vq) = \frac{1}{2}$ where $h(\vq) = \dfrac{1}{2} \mnorm{\vq}^2$. 
The normal unit vector $\vN$ is equal to $dh^\sharp = \vq$, 
and thus 
\[\begin{aligned}
\hess(f)[\vv,\vw] & =  
 D^2 F [\vv, \vw] 
-
\mscalar{dF^\sharp}{\vq} 
\mscalar{  
D_\vv \vq}{\vw} 
\\
& = D^2 F [\vv, \vw] 
-
\mscalar{dF^\sharp}{\vq} 
\mscalar{\vv}{\vw} 
\\
\end{aligned}
\]

If $F=U$, then $U$ is homogeneous of degree $-\alpha$, and therefore
$\mscalar{dU^\sharp}{\vq} =  dU(\vq)[\vq] = -\alpha U(\vq)$.
The following equation follows, at any critical point $x$ of the restriction 
of $U$ to $\SS$.
\begin{equation}
\label{eq:hess1}
\hess(U|_\SS)[\vv,\vw]  =  
 D^2 U(x) [\vv, \vw] 
+
\alpha U(x) 
\mscalar{\vv}{\vw} 
\end{equation}

A \term{central configuration} is a configuration $\vq\in \FF_n(E)$ with the property that 
there exists
a multiplier 
$\lambda\in \RR$ 
such that 
\begin{equation}
\label{eq:CC}
dU^\sharp(\vq) = \lambda \vq,
\end{equation}
where $dU^\sharp$ is the gradient in $E^n$ of the 
potential function $U$, with respect to the mass-metric. Equation 
\eqref{eq:CC} implies that $ \lambda = -\alpha \dfrac{U(\vq)}{\mnorm{\vq}^2}$
(for more on central configurations see e.g. \cite{wintner} (§369--§382bis at pp.~284--306),
\cite{MR2139425},
\cite{Mo90},
\cite{MR856309},
\cite{Xia},
\cite{MR1320359},
\cite{HaM2207019},
\cite{MR2925390},
\cite{MR3469182},
\cite{ferrario2016central}).
An equivalent definition for a normalized (i.e. $\vq\in \SS$) central configuration is the 
following:
\begin{propo}
$\vq\in \SS_n(E)$ is a central configuration if and only if it is a critical point
for the restriction $U|_{\SS}$ of the potential function to $\SS=\SS_n(E)$.
\end{propo}

Let $c\from E^n \to E^n$ be the isometry defined as 
$c(\vq) = \vq'$, with 
\begin{equation}
\label{eq:defc}
\vq'_j = \vq_j - 2\vq_0
\end{equation}
 for each $j=1,\ldots,n$, 
and with $\vq_0 = \sum_{j=1}^n m_j \vq_j$. It is an isometry, since
$\mnorm{\vq'}^2 = \sum_{j=1}^n m_j \eucnorm{\vq_j - 2\vq_0}^2 = 
\sum_{j=1}^n m_j(\eucnorm{\vq_j}^2 + 4 \eucnorm{\vq_0}^2  - 4 \vq_j \cdot \vq_0) =
\sum_{j=1}^n m_j \eucnorm{\vq_j}^2 + 4 (\sum_{j=1}^n m_j) \eucnorm{\vq_0}^2 -  4 \eucnorm{\vq_0}^2
= \mnorm{\vq}^2$. It is the orthogonal reflection around the space  
of all configurations with center of mass $\vq_0$ equal to zero:
$c\vq = \vq \iff \vq_0 =0$. It is easy to see that if $\vq$ is a central configuration
then $c\vq = \vq$, and hence $\vq$ has center of mass $\vq_0$ in $0$. 
Let $Y$ be defined as $Y = \{ \vq \in E^n : \vq_0 = \zero \}$,
and $\SS^c = \SS \cap Y$, $S^c = S \cap Y$. In other words,
elements of $\SS^c$ are normalized configurations with center of mass in $0$. 
Since the potential function is invariant up to translations, 
$U(c\vq) = U(\vq)$, and any critical point of the restriction $U|_{\SS^c}$ 
is a critical point of $U|_{\SS}$ (for example, by Palais Principle of Symmetric Criticality 
\cite{palais}).
Thus it is equivalent to define 
central configurations as critical points of $U|_{\SS^c}$
or as critical points of $U|_{\SS}$.

\section{Fixed points, $SO(d)$-orbits and projective configuration spaces}

Following 
\cite{MR2372989,Fer2015},
consider the function $F\from \SS_n(E) \to S_n(E)$ defined as 
\begin{equation}
\label{eq:F}
F (\vq) = - \dfrac{ dU^\sharp (\vq) }{\mnorm{dU^\sharp(\vq)} } 
\end{equation}
where $dU^\sharp$ is the gradient of $U$, with respect to the mass-metric.

First, consider the isometry $c$ defined above in \eqref{eq:defc}. 
Since $F(c\vq) = c F(\vq)$, $F(\SS^c) \subset S^c$. Moreover, 
as the image of $F$ is in $S^c$, if $F^c$ denotes the restriction $F^c \from \SS^c \to S^c$,
\begin{equation}
\label{eq:isom1}
\Fix(F^c) = \Fix(F),
\end{equation}
and the fixed point indexes are exactly the same. 

Let $O(d)$ be the special orthogonal group, acting diagonally on $E^n$,
and $SO(d)$ the special orthogonal subgroup. The inertia ellipsoid
$\SS$, $S$ and $Y$ are $O(d)$-invariant in $E^n$, 
and so are $\SS^c$ and $S^c$. 
Let $\pi \from S \to S/G$ denote the quotient map onto the space of $G$-orbits,
for $G=SO(d)$ or $G=-O(d)$.  

Since $U$ is a $G$-invariant function, 
$F$ is a $G$-equivariant map, and hence it induces a map on the quotient spaces:
\begin{equation}\label{quotientspaces}
\xymatrix{
\SS \ar[r]^F \ar[d]_\pi  & S \ar[d]^\pi \\
\SS/G \ar[r]_{f} & S/G
}
\end{equation}

A fixed point of $F$ is a normalized configuration $\vq$ such that 
$F(\vq)=\vq$. 
A fixed point of $f$ is a conjugacy class $[\vq]$ of configurations
such that $f([\vq])=[\vq]$, i.e. it is a conjugacy class $[\vq]$ 
such that $F(\vq) = g \vq$ for some $g\in G$.  
It follows from Theorem (2.5) of \cite{Fer2015}  that if $G=SO(d)$, 
then 
$F(\vq) = g\vq$ $\iff$ $F(\vq) =\vq$, or equivalently that 
\begin{equation}
\label{eq:fer2015}
G=SO(d) \implies 
\pi(\Fix(F)) = \Fix(f),
\end{equation}
and hence also that 
$\pi(\Fix(F^c)) = \Fix(f^c)$. 

\begin{remark} 
Elements in $\SS/G$ are called projective configurations: for $d=2$ and $G=SO(2)$, 
$S / G$ is the $(n-1)$-dimensional complex projective space 
$\PP^{n-1}(\CC)$, and $S^c$ is a hyperplane in it,
hence a $(n-2)$-dimensional complex projective space 
$\PP^{n-2}(\CC)$
For $n=3$, it is the Riemann sphere. Projective configurations are 
projective classes of elements $[\vq_1:\vq_2:\vq_3]$ in $\PP^1(\CC)\subset \PP^2(\CC)$ 
such that $m_1 \vq_1 + m_2 \vq_2 + m_3 \vq_3 =0$, 
$\vq_j \in \CC$, and $\vq_1 \neq \vq_2$, $\vq_1 \neq \vq_3$, $\vq_2 \neq \vq_3$.  

For $d=1$, projective configurations are equivalence classes under the action of the orthogonal 
group $G=O(1) = \ZZ_2$. 
\end{remark}

The following Corollary of \eqref{eq:fer2015}
 shows that the difference is minor. 

\begin{coro}
\label{coro:fer2015}
If $\vq \in \SS$ is a central configuration such that 
$F(\vq) = g \vq$, with $g\in O(d)$ (acting diagonally on $E^n$), 
then $g=1$. 
\end{coro}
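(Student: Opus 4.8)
The plan is to show that, for a central configuration, the equation $F(\vq)=g\vq$ carries no rotational or reflectional freedom: it forces $F(\vq)=\vq$, hence $g\vq=\vq$, from which $g=1$ is read off using the isotropy of $\vq$. First I would pin down the sign of the Lagrange multiplier: since $\vq$ is a critical point of $U|_{\SS}$, equation \eqref{eq:CC} together with $\mnorm{\vq}^{2}=1$ gives $dU^\sharp(\vq)=\lambda\vq$ with $\lambda=-\alpha\,U(\vq)$, and $\lambda<0$ because $\alpha>0$ and $U(\vq)$ is a sum of positive terms. Substituting into the definition \eqref{eq:F} of $F$,
\[
F(\vq)=-\frac{dU^\sharp(\vq)}{\mnorm{dU^\sharp(\vq)}}
=-\frac{\lambda\,\vq}{|\lambda|\,\mnorm{\vq}}
=-\frac{\lambda}{|\lambda|}\,\vq=\vq ,
\]
so $F(\vq)=\vq$ independently of $g$, and the hypothesis $F(\vq)=g\vq$ becomes $g\vq=\vq$; equivalently $g$ fixes each $\vq_j$ and therefore restricts to the identity on the linear span $\langle\vq_1,\dots,\vq_n\rangle\subseteq E$, i.e.\ $g$ lies in the isotropy subgroup $O(d)_\vq$ of $\vq$ for the diagonal action.

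To conclude $g=1$ I would invoke that $\vq$ is of maximal $O(d)$-orbit type — the standing setting of the paper (cf.\ the manifold of maximal orbits in the introduction, and the notion of non-degeneracy used there) — so that the span of the $\vq_j$ is all of $E$ and an orthogonal transformation fixing a spanning family of vectors must be the identity. If instead one assumes only $g\in SO(d)$, then \eqref{eq:fer2015} already yields $F(\vq)=\vq$, hence $g\vq=\vq$, and the same span argument finishes; this is the precise sense in which the difference between $SO(d)$ and $O(d)$ is ``minor''.

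The point that needs care is exactly this last step, not the computation: the relation $g\vq=\vq$ by itself does not force $g=1$ in general — a collinear central configuration in $E=\RR^{d}$ with $d\ge 2$ is fixed by every rotation about its axis, and by every reflection fixing that axis — so the conclusion genuinely relies on the maximal-orbit-type (equivalently, trivial-isotropy) hypothesis, whereas the identity $F(\vq)=\vq$, which removes any orientation ambiguity, is immediate from the homogeneity of $U$.
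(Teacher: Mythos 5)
Your route is genuinely different from the paper's. The paper never computes $F(\vq)$ directly: it embeds $E$ into $E'=E\oplus\RR$, extends $g\in O(d)$ to an element $g'\in SO(d+1)$ with $g'E=E$ and $g'|_E=g$, notes that $F(\vq)=g'\vq$ still holds in $\FF_n(E')$, and then applies the cited Theorem (2.5) of \cite{Fer2015} (the source of \eqref{eq:fer2015}) in the $(d+1)$-dimensional configuration space to get $g'=1$, hence $g=1$. The point of that detour is that it does not consume the hypothesis that $\vq$ is already a central configuration: it is exactly what one needs to upgrade \eqref{eq:fer2015} from $SO(d)$ to $O(d)$, i.e.\ to show that a fixed point of $f$ on the $O(d)$-quotient lifts to a genuine fixed point of $F$. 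Your computation $F(\vq)=-\lambda\vq/\lvert\lambda\rvert=\vq$ is correct, but it uses the central-configuration hypothesis at the very first step, so it proves the stated corollary more cheaply while being useless for the stronger ``fixed point of $f$ implies fixed point of $F$'' assertion that the corollary is meant to encapsulate.

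Your reservation about the final step is well founded, and it applies equally to the paper's own argument: \eqref{eq:fer2015} as quoted only yields $F(\vq)=\vq$, i.e.\ $g'\vq=\vq$, and passing from ``$g$ fixes every $\vq_j$'' to ``$g=1$'' genuinely requires that $\vq_1,\dots,\vq_n$ span $E$, equivalently that the $O(d)$-isotropy of $\vq$ is trivial (maximal orbit type). Your collinear example (an Euler configuration in $\RR^2$ together with the reflection across its axis) satisfies the hypotheses of the corollary as literally written and has $g\neq 1$, so the conclusion must either be weakened to $F(\vq)=\vq$ or the maximal-orbit-type assumption of Section~3 must be imported into the statement, as you do; note that this assumption is only introduced after the corollary, so calling it ``the standing setting'' is slightly anachronistic, even though it is in force wherever the corollary is actually used (on $\MM_n(\RR^d)$, where isotropy is trivial and your span argument closes the gap). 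In short: your proof is correct for the corrected statement, is more elementary than the paper's on the first half, and correctly isolates the one step that is not purely formal.
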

\begin{proof}
Let $E' = E \oplus \RR$ be the euclidean space of dimension $d+1$,
and $E\subset E'$ one of its $d$-dimensional subspaces. 
If $\vq\in \SS \subset \FF_n(E)$, then 
$\vq\in \SS \subset \FF_n(E) \subset \FF_n(E')$,
and there exists $g' \in SO(d+1)$ such that $g'E=E$ and 
the restriction of $g'$ to $E$ is equal to $g$: it follows that 
$F(\vq) = g' \vq$, in $\FF_n(E')$, and therefore $g'=1$, from which it follows that 
$g=1$. 
\end{proof}

%
%

Homological calculations on configurations spaces for the sake
of central configurations have been done by 
Palmore \cite{MR0321389},
Pacella \cite{MR856309} and
McCord \cite{MR1417773}. 
We can arrange all the spaces inertia ellipsoids and the corresponding 
projective quotients as in diagram \eqref{eq:diagram}. 
\begin{equation}
\label{eq:diagram}
\begin{aligned}
\xymatrix{
\SS_n^c(\RR) \ar^{\iota_1}[r] \ar[d] & \SS_n^c(\RR^2) \ar^{\iota_2}[r] \ar[d] & 
\SS_n^c(\RR^3) \ar^{\iota_3}[r] \ar[d] & \cdots \\
\SS_n^c(\RR)/SO(1) \ar[r]^{\bar\iota_1} \ar[d] & \SS_n^c(\RR^2)/SO(2) \ar^{\bar\iota_2}[r] \ar[d] 
& \SS_n^c(\RR^3)/SO(3) \ar^{\bar\iota_3}[r] \ar[d] & \cdots \\
\SS_n^c(\RR)/O(1) \ar^{\bar\bar\iota_1}[r] & \SS_n^c(\RR^2)/O(2) \ar^{\bar\bar\iota_2}[r] 
& \SS_n^c(\RR^3)/O(3) \ar^{\bar\bar\iota_3}[r]  & \cdots 
}
\end{aligned}
\end{equation}

For each $d$, 
$\SS_n^c(\RR^d)$ is a deformation retract of 
$\FF_n^c(\RR^d)$, which in turn is a deformation retraction 
of $\FF_n(\RR^d)$ (where $\FF_n^c(E)$ denotes the space
of all configurations with center of mass in $0$). 
The Poincaré polynomial for the cohomology of the configuration space 
$\FF_n(\RR^d)$ is equal to 
\[
P(t)= \prod_{k=1}^{n-1}(1+kt^{d-1}),
\]
as shown e.g. in Theorem 3.2 of \cite{MR1634470} (see also Proposition 2.11.2 of \cite{MR3469182}).

Now, note that in the sequence of projections
\[
\SS_n^c(\RR^d) \to
\SS_n^c(\RR^d)/SO(d) \to
\SS_n^c(\RR^d)/O(d)
\]
the second map corresponds to the projection given by 
the action of the quotient group $\ZZ_2 = O(d)/SO(d)$ 
on the quotient space $\SS_n^c(\RR^d)/SO(d)$ ($SO(d)$ is normal in $O(d)$). For $d\geq 2$, 
let $h$ be the orthogonal reflection of $\RR^d$ around $\RR^{d-1}\subset \RR^d$:
its coset $hSO(d)$ is the generator of $O(d)/SO(d)$, and hence 
the image $\Ima(\bar\iota_{d-1})$
in $\SS_n^c(\RR^d)/SO(d)$ is fixed by $O(d)/SO(d)$. Actually,
it is equal to the fixed point subset of $O(d)/SO(d)$ in 
$\SS_n^c(\RR^d)/SO(d)$. Outside the image of $\bar\iota_{d-1}$, 
therefore the $\ZZ_2$ action is free: let $\MM_n(\RR^d)$ denote the 
manifold
\begin{equation}\label{def:M}
\MM_n(\RR^d) 
= 
\left( \SS_n^c(\RR^d)/SO(d) \smallsetminus \Ima(\bar\iota_{d-1}) \right) / \ZZ_2 
= \SS_n^c(\RR^d)/O(d) \smallsetminus \Ima(\bar\bar\iota_{d-1}),
\end{equation}
where the last equality holds since $\bar\iota_{d-1}$ factors through
$\SS_n^c(\RR^{d-1})$.

The next proposition follows from the dimension of $SO(d)$ and the
previous remarks.  

\begin{propo}
The subspace of all points in $\SS_n^c(\RR^d)/O(d)$
with maximal orbit type is the open subspace
$\MM_n(\RR^d)$
defined in \eqref{def:M}, and it is
is a manifold of dimension 
\[
\dim \MM_n(\RR^d) = 
d(n-1)-1 - d(d-1)/2  
\]
\end{propo}


For $d=1$, it is the projective space $\PP^{n-2}(\RR)$ minus collisions. 
For $d=2$, it is a $(2n-4)$ dimensional manifold (where 
$\PP^{n-2}(\CC)$ minus collinear 
and minus collisions is its double cover). 

\begin{propo}
$\SS^c_n(\RR^2)/SO(2)$ has the same homotopy type of $\FF_{n-2}(\RR^2\smallsetminus \{p,q\})$,
where $p,q$ are two arbitrary distinct points of $\RR^2$. 
\end{propo}
\begin{proof}
It is Lemma 4.1 of \cite{MR1417773}.
\end{proof}

It follows that the Poincaré polynomial (where $\beta_j$ are 
Betti numbers) of $\SS^c_n(\RR^2)/SO(2)$ is
\begin{equation}
\label{eq:pp}
p(t) = 
\prod_{k=2}^{n-1} (1+kt) 
= \sum_{j=0}^{n-2} \beta_j t^j.
\end{equation}
(see also Proposition 2.11.3 of \cite{MR3469182} ).
McCord in \cite{MR1417773} proved also that 
\[
\dim H^k (\MM_n(\RR^2)) = \begin{cases}
\sum_{j=0}^k \beta j & \text{if $k\leq n-3$} \\
0 & \text{otherwise},
\end{cases}
\]
while Pacella in (2.4) of 
\cite{MR856309}
computed the $SO(3)$-equivariant homology (using Borel homology) Poincaré series of 
$\SS^c_n(\RR^3) \sim \FF_n(\RR^3)$ as 
\[
P^{SO(3)}(t) = 
\dfrac{ \prod_{k=2}^{n-1}(1+kt^{2}) }
{1-t^2}
\]

\begin{remark}
The projective quotient 
$\SS^c_n(\RR^2)/SO(2)$ 
is a manifold (it is the projective space 
$\PP^{n-2}(\CC)$ with collisions removed). 
It contains $\SS^c_n(\RR)/O(1)$ 
as a submanifold (the collinear configurations). 
For $d\geq 3$ the isotropy groups of the action start being non-trivial,
and the filtration of subspaces of constant 
orbits type in  $\SS^c_n(\RR^d)/SO(d)$ 
is given by the horizontal arrows $\bar\iota_j$ in diagram
\eqref{eq:diagram}.
\end{remark}

\section{Fixed points and Morse indices}

Let $\vq\in \SS_n^c(\RR^d)$ a central configuration, and hence a fixed point of 
the map $F$  defined 
above in \eqref{eq:F}, 
such that its $O(d)$-orbits lies in the maximal orbit type 
submanifold $\MM_n(\RR^d) \subset \SS^c(\RR^d)/O(d)$.

\begin{propo}
\label{propo:EC}
If $DF\from T_\vq \SS \to T_\vq \SS$  denotes the differential of 
$F$ at the central configuration $\vq$, then for any $\vv,\vw \in T_\vq \SS$ the following
equation holds:
\[
 D^2 U(\vq) [\vv,\vw]  = -
\alpha U(\vq)\mscalar{DF[\vv]}{\vw} 
\]
\end{propo}
\begin{proof}
As we have seen in the introduction, 
$\mscalar{ D_\vv dU^\sharp}{ \vw} = D^2U [\vv,\vw]$,
and if $\vq$ is a normalized central configuration then 
by \eqref{eq:CC} $dU^\sharp (\vq)= \lambda \vq$ 
with $\lambda =   -\alpha \dfrac{U(\vq)}{\mnorm{\vq}^2}= -\alpha U(\vq)$.
It follows that  $\mscalar{dU^\sharp}{\vw} = 0$, being $\vw$ tangent to $\SS$, and
$\mnorm{dU^\sharp} = -\lambda = \alpha U(\vq)$. Also,
\[\begin{aligned}
\mscalar{DF[\vv]}{\vw}  & = 
\mscalar { D_\vv \left( 
- \dfrac{ dU^\sharp}{\mnorm{dU^\sharp}}
\right)}{\vw} 
\\
&= 
- \mscalar { \left( 
 \dfrac{ D_\vv dU^\sharp}{\mnorm{dU^\sharp}}
\right)}{\vw} 
-
\mscalar { 
  D_\vv \left( \dfrac{1}{\mnorm{dU^\sharp}}
\right) dU^\sharp }{\vw}  \\
&= 
- \dfrac{1}{\mnorm{dU^\sharp}}
 \mscalar{ D_\vv dU^\sharp}{\vw} 
- 0 
\\
&= - \dfrac{1}{\alpha U(\vq)} D^2U(\vq)[\vv,\vw].
\end{aligned}
\]
\end{proof}

Combining \ref{propo:EC} with equation 
\eqref{eq:hess1} the following corollary holds.
\begin{coro}
\label{coro:plomboplata}
If $\vq$ is as above, then 
for each $\vv,\vw \in T_\vq \SS$
\[
\hess(U|_\SS)[\vv,\vw] = \alpha U(\vq) \left(
\mscalar{\vv}{\vw} - \mscalar{DF[\vv]}{\vw} 
\right)
\]
\end{coro}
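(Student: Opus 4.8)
The plan is to combine the two displayed identities we already have at hand: equation \eqref{eq:hess1}, which expresses $\hess(U|_\SS)[\vv,\vw]$ as $D^2U(\vq)[\vv,\vw] + \alpha U(\vq)\mscalar{\vv}{\vw}$, and the conclusion of Proposition \ref{propo:EC}, which expresses $D^2U(\vq)[\vv,\vw]$ as $-\alpha U(\vq)\mscalar{DF[\vv]}{\vw}$. Substituting the second into the first immediately gives $\hess(U|_\SS)[\vv,\vw] = -\alpha U(\vq)\mscalar{DF[\vv]}{\vw} + \alpha U(\vq)\mscalar{\vv}{\vw}$, and factoring out $\alpha U(\vq)$ yields exactly the claimed formula. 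So the core of the proof is a one-line algebraic substitution.

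The only point that needs a word of care is making sure the hypotheses of both ingredients are simultaneously in force. Equation \eqref{eq:hess1} was derived for $F = U$ at a critical point $x$ of $U|_\SS$, using homogeneity of degree $-\alpha$ and the normal vector $\vN = \vq$; Proposition \ref{propo:EC} was stated for a normalized central configuration $\vq$. By the Proposition in Section 1, a normalized central configuration is precisely a critical point of $U|_\SS$, so both hold at the same point, and both quantify over the same tangent space $T_\vq\SS$. I would therefore open the proof by invoking this equivalence (and the standing hypothesis that $\vq\in\SS_n^c(\RR^d)$ with maximal orbit type, which is more than enough), then simply cite \eqref{eq:hess1} and Proposition \ref{propo:EC} and perform the substitution.

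There is essentially no obstacle here: the corollary is a bookkeeping step packaging two already-proven facts into the symmetric ``$\mathrm{Id} - DF$'' form that will presumably be compared with the fixed point index later. If anything deserves a remark, it is that the right-hand side is manifestly symmetric in $\vv$ and $\vw$ (since the Hessian is), which forces $\mscalar{DF[\vv]}{\vw} = \mscalar{\vv}{DF[\vw]}$, i.e. $DF$ is self-adjoint with respect to the mass-metric on $T_\vq\SS$; I might note this as a byproduct since it will matter for relating the Morse index of $\hess(U|_\SS)$ to the eigenvalues of $DF$, but it is not needed for the statement itself.

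\begin{proof}
By the Proposition of Section~1, the normalized central configuration $\vq$ is a critical point of $U|_{\SS}$, so equation \eqref{eq:hess1} applies at $\vq$: for all $\vv,\vw\in T_\vq\SS$,
\[
\hess(U|_\SS)[\vv,\vw] = D^2 U(\vq)[\vv,\vw] + \alpha U(\vq)\mscalar{\vv}{\vw}.
\]
On the other hand, Proposition~\ref{propo:EC} gives $D^2 U(\vq)[\vv,\vw] = -\alpha U(\vq)\mscalar{DF[\vv]}{\vw}$ for all $\vv,\vw\in T_\vq\SS$. Substituting this into the previous identity yields
\[
\hess(U|_\SS)[\vv,\vw] = -\alpha U(\vq)\mscalar{DF[\vv]}{\vw} + \alpha U(\vq)\mscalar{\vv}{\vw} = \alpha U(\vq)\left(\mscalar{\vv}{\vw} - \mscalar{DF[\vv]}{\vw}\right),
\]
which is the assertion. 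In particular, since the left-hand side is symmetric in $\vv$ and $\vw$, so is $\mscalar{DF[\vv]}{\vw}$, i.e. $DF$ is self-adjoint with respect to the mass-metric on $T_\vq\SS$.
\end{proof}
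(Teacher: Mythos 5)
Your proof is correct and is exactly what the paper does: it introduces the corollary with ``Combining \ref{propo:EC} with equation \eqref{eq:hess1} the following corollary holds,'' i.e.\ the same one-line substitution you carry out. Your added observation that $DF$ is self-adjoint with respect to the mass-metric on $T_\vq\SS$ is a harmless and correct bonus.
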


Finally, 
consider again the group $O(d)$ acting on $\SS_n^c(\RR^d)$. Let $F$ and $\vq$ be 
the map and the central configuration defined above.  
Recall that $f\from \SS/O(d) \to S/O(d)$ denotes the map defined on the quotient. 
Let $[\vq] \in \MM_n(\RR^d) \SS/O(d)$ 
denote the projective class (i.e. the $O(d)$-orbit of $\vq$) of $\vq$,
which is a fixed point of $f$, and is a critical point of the map 
$\bar U \from \MM_n(\RR^d) \to \RR$
induced on $\MM_n$ by $U$, defined simply as 
$\bar U([x]) = U(x)$ for each $x\in \SS_n^c(\RR^d)$. 

\begin{theo}
\label{theo}
The point $[\vq]$ is a non-degenerate critical point of $\bar U$ if and only if 
it is a non-degenerate fixed point of $f$. If $\ind([\vq],f)$ 
denotes the fixed point index of $[\vq]$ for $f$, and $\mu([\vq])$ the Morse
index of $[\vq]$, then the following equation holds:
\[
\ind([\vq],f) = (-1)^ { \mu([\vq])}.
\]
\end{theo}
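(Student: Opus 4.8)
The plan is to reduce the statement about $f$ on the quotient manifold $\MM_n(\RR^d)$ to a statement about the ambient map $F$ on the slice transverse to the $O(d)$-orbit, and then invoke Corollary~\ref{coro:plomboplata} to relate the linearization $DF$ on that slice to the Hessian of $\bar U$. First I would fix a point $\vq$ in the $O(d)$-orbit and decompose the tangent space $T_\vq\SS^c$ orthogonally (with respect to the mass-metric) as $T_\vq(O(d)\vq) \oplus W$, where $W$ is the slice, canonically identified with $T_{[\vq]}\MM_n(\RR^d)$ since $[\vq]$ has maximal orbit type. Because $F$ is $O(d)$-equivariant and $\vq$ is a fixed point of $F$, the differential $DF\colon T_\vq\SS^c\to T_\vq S^c$ preserves this splitting: it restricts to the identity on the orbit-tangent directions $T_\vq(O(d)\vq)$ (these are the infinitesimal isometries, and $F$ commutes with them, sending each such vector to itself since $F(\vq)=\vq$), and it restricts on $W$ to a map $DF|_W$ which is exactly the differential $Df$ of the quotient map $f$ at $[\vq]$. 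This is the structural heart of the argument.

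Next I would compute the fixed point index. Since $f$ fixes $[\vq]$ and $Df = DF|_W$ has no eigenvalue $1$ precisely when $\mathrm{Id}_W - DF|_W$ is invertible, and by Corollary~\ref{coro:plomboplata} applied with $\vv,\vw\in W$ we have
\[
\hess(\bar U)[\vv,\vw] = \alpha U(\vq)\bigl(\mscalar{\vv}{\vw} - \mscalar{DF[\vv]}{\vw}\bigr),
\]
so $\hess(\bar U)$ on $W$ equals $\alpha U(\vq)$ times the bilinear form associated to $\mathrm{Id}_W - DF|_W$. Here one must check that $\hess(\bar U)$ and $\hess(U|_{\SS^c})$ restricted to the slice agree — this holds by the Principle of Symmetric Criticality and the fact that $W$ is orthogonal to the orbit — and that $DF|_W$ is self-adjoint with respect to the mass-metric on $W$ (which follows from the self-adjointness of $D^2U$ via Proposition~\ref{propo:EC}). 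Consequently $[\vq]$ is a non-degenerate critical point of $\bar U$ iff $\mathrm{Id}_W - DF|_W$ is invertible iff $1$ is not an eigenvalue of $Df$ iff $[\vq]$ is a non-degenerate fixed point of $f$; this gives the first assertion.

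For the index formula: since $U(\vq)>0$ and $\alpha>0$, the signature of $\hess(\bar U)$ on $W$ equals the signature of $\mathrm{Id}_W - DF|_W$, so the Morse index $\mu([\vq])$ is the number of eigenvalues of $\mathrm{Id}_W - DF|_W$ that are negative, i.e. the number of real eigenvalues of $DF|_W$ that exceed $1$. The fixed point index of a non-degenerate fixed point of a $C^1$ map is $\mathrm{sign}\,\det(\mathrm{Id} - Df) = (-1)^{\#\{\text{real eigenvalues of }Df \text{ that are }>1\}}$, because each conjugate pair of non-real eigenvalues and each real eigenvalue less than $1$ contributes a positive factor to $\det(\mathrm{Id}-Df)$. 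Hence $\ind([\vq],f) = (-1)^{\mu([\vq])}$.

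The main obstacle I expect is not the index bookkeeping but the justification that the slice $W$ is genuinely $DF$-invariant and maps under $F$ onto a complement of the orbit-tangent space at $F(\vq)=\vq$ in a way compatible with the quotient differential — i.e. verifying cleanly that $Df$ at $[\vq]$ is honestly identified with $DF|_W$ as a self-adjoint operator on $(W,\mscalar{\cdot}{\cdot})$. This requires care about the Riemannian submersion structure $\SS^c \to \SS^c/O(d)$ near the maximal-orbit-type stratum (where the quotient is a manifold and the submersion is locally trivial), and about checking that $F$, being built from the gradient of an invariant function, descends to a map whose linearization at $[\vq]$ is precisely the horizontal part of $DF$. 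Once this identification is secured, the remaining steps are the short computations sketched above.
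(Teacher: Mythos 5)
Your proposal is correct and follows essentially the same route as the paper: both hinge on Corollary~\ref{coro:plomboplata} identifying $\operatorname{Id}-DF$ (up to the positive factor $\alpha U(\vq)$) with the Hessian, so that the $1$-eigenspace of $DF$ detects degeneracy and the eigenvalues exceeding $1$ count the Morse index, giving $\ind([\vq],f)=(-1)^{\mu([\vq])}$. The only difference is that you make explicit the orbit--slice splitting and the identification of $Df$ with $DF$ restricted to the slice, which the paper's proof leaves implicit.
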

\begin{proof}
The point $[\vq]$ is a non-degenerate  critical point if and only if 
the dimension of the kernel of the Hessian $\hess(U|_\SS)(\vq)$ 
is equal to the dimension of $SO(d)$, i.e. $d(d-1)/2$. 
By \ref{coro:plomboplata}, 
the kernel is equal to the
eigenspace of $DF(\vq)$ corresponding to the eigenvalue $1$, 
which has dimension $d(d-1)/2$ if and only if the fixed point $[\vq]$ 
is non-degenerate. 
Now, if this holds then the index $\ind([\vq],f)$ is equal to the number $(-1)^e$.
where $e$ is the number of negative eigenvalues  $1 - f'$, 
which is the same as the number of negative eigenvalues of $1-F'$.
Again by \ref{coro:plomboplata} and since $U>0$,
 $e$ is equal to the number of negative eigenvalues
of 
$\hess(U|_\SS)$, which is by definition the Morse index $\mu([\vq])$.
\end{proof}

\begin{remark}
Unfortunately, a former version of this 
statement had a wrong formula for $\ind(\vq)$. 
In fact, in (3.5) of \cite{Fer2015} one should put $\epsilon=0$, and not 
$\epsilon = d(n-1)-1-d(d-1)/2  = \dim \MM_n(\RR^d)$.
The error occurred because I used the wrong sign of $U$ in (3.1) ($V=-U$ instead of $U$). 
\end{remark}

\begin{example}
For $d=1$ and any $n$, all critical points are local minima of $U$,
and hence $\mu=0$, and fixed points have index $1$. 
The map induced on the quotient can be regularized on binary collisions (see 
\cite{Fer2015,MR2372989}), hence the map on the quotient can be extended to a self-map
$f\from \PP^1(\RR) \to \PP^1(\RR)$ with three fixed points of index $1$.
Therefore the Lefschetz number of $f$ is $3$, and $f$ has degree $-2$.

For $d=2$ and $n=3$, 
the three Euler configurations have  $\mu=1$, while 
the two Lagrange points have $\mu=1$, hence the map $f$ 
induced on the quotient $\PP^1(\CC)$ (again,
by regularizing the binary collisions) has Lefschetz number  equal to
$L(f) = 2 -3 = -1$. Therefore the degree of $f$ is equal to $-2$. 
\end{example}




\end{document}